\documentclass[11pt]{article}
\usepackage{latexsym,amssymb,amsmath,amsfonts,amsthm}
\usepackage{graphics,graphicx,mathrsfs,subfigure}
\usepackage{color,xspace}
\newcommand{\R}{{\mat R}}

\newcommand{\be}{\begin{eqnarray}}
\newcommand{\ben}{\begin{eqnarray*}}
\newcommand{\en}{\end{eqnarray}}
\newcommand{\enn}{\end{eqnarray*}}

\newcommand{\mat}{\mathbb}

\newtheorem{theorem}{Theorem}[section]
\newtheorem{lemma}[theorem]{Lemma}

\newtheorem{remark}[theorem]{Remark}

\definecolor{rot}{rgb}{1,0,0}
\definecolor{hw}{rgb}{0,0,1}

\begin{document}
\renewcommand{\theequation}{\arabic{section}.\arabic{equation}}
\title{\bf

Observability inequality for the wave equation
 
}
\author{ Suliang Si\thanks{School of Mathematics and Statistics, Shandong University of Technology,
Zibo, 255000, China ({\tt sisuliang@amss.ac.cn})}}
\date{}

 



\maketitle

\begin{abstract}
 In this paper,  only Carleman estimates are used, without energy estimates, we derive observability inequality. The main tool consists in the use of a new Carleman estimate.
 
\end{abstract}

%


\section{Introduction}

Let \(\Omega\subset\R^n\) be a bounded domain with \(\mathcal{C}^2\)-boundary \(\partial\Omega\). We investigate the following hyperbolic initial boundary value problem (IBVP)
\begin{equation}\label{u}
\begin{cases}
\partial_t^2 u(x,t)-\Delta u(x,t)+q(x)u(x,t) =F(x,t),  & (x,t) \in Q:=\Omega\times(0,T), \\ 
u(x,0)=u_0, \quad  \partial_tu(x,0)=u_1,  & x \in \Omega,\\
u(x,t)=0, & (x,t) \in \partial\Omega\times(0, T). \\ 
\end{cases}
\end{equation}
First of all, suppose that \( u_0 \in H^1(\Omega), \, u_1 \in L^2(\Omega), \, q \in L^\infty(\Omega)\) and \( F \in L^1(0, T; L^2(\Omega)) \) satisfying the compatibility condition \( u_0(x)=0 \) for all \( x \in \partial\Omega \). Then the IBVP (\ref{u}) is well-posed and one can also prove  that
\[
u \in C([0, T]; H^1_0(\Omega)) \cap C^1([0, T]; L^2(\Omega)).
\]
Assume  there exists \( x_0\notin \Omega\) such that 
\[\Gamma_0\supset\{x\in\partial\Omega; \ (x-x_0)\cdot\nu\geq 0\}.\]
Define \[\Sigma_0=\Gamma_0\times(0,T).\]
We state the following classical theorem.
\begin{theorem}

Let \(T>3\sup\limits_{x\in\Omega} |x - x_0|\). Then for any weak solution
\[u \in C([0, T]; H^1_0(\Omega)) \cap C^1([0, T]; L^2(\Omega))\]
of (\ref{u}), we have
\begin{equation}\label{Op}
\|u_0\|_{H_0^1(\Omega)}^2 + \|u_1\|_{L^2(\Omega)}^2 
\le M\bigl(\|\partial_\nu u\|_{L^2(\Sigma_0)}^2 + \|F\|_{L^2(Q)}^2\bigr)
\end{equation}
for any \(u_0\in H_0^1(\Omega)\) and \(u_1\in L^2(\Omega)\).
Here the constant \(M>0\) is independent of \(u_0, u_1\).

\end{theorem}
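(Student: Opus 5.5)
The plan is to derive (\ref{Op}) from a single global Carleman estimate on $Q$, using no energy identity. Following the abstract, the weight decay at the endpoints $t=0,T$ must itself control the initial energy. We work with smooth data and conclude by density.

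\textbf{Step 1: the weight.} Set $\psi(x,t)=|x-x_0|^2-\beta (t-\tfrac T2)^2+c_0$ with $0<\beta<1$, and take $\varphi=e^{\lambda\psi}$. Because $T>3\sup_{\Omega}|x-x_0|$, we can choose $\beta$ close to $1$ and $c_0$ so that two conditions hold.
\begin{itemize}
\item[(i)] At $t=T/2$ the weight dominates its values near $t=0,T$: $\psi(x,T/2)>\max_{\bar\Omega}\psi(\cdot,0)+\delta$.
\item[(ii)] $\psi$ is pseudoconvex for $\partial_t^2-\Delta$ on $\bar Q$, which holds because $\beta<1$ and $x_0\notin\bar\Omega$ gives $\nabla\psi\neq0$.
\end{itemize}
Condition (i) requires $\beta T^2/4>\sup|x-x_0|^2$ up to margins. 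The factor $3$ leaves room for a time cutoff.

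\textbf{Step 2: Carleman estimate for a cut-off solution.} Let $\chi(t)$ equal $1$ on $[\varepsilon,T-\varepsilon]$ and vanish near $t=0,T$, and set $w=\chi u$. Then $w$ solves the same equation with source $\chi F+2\chi'\partial_tu+\chi''u$. We apply the Carleman estimate with boundary term: for $s$ large,
\[
\int_Q s\lambda\varphi\,(|\nabla w|^2+|\partial_t w|^2)e^{2s\varphi}+s^3\lambda^4\varphi^3|w|^2e^{2s\varphi}
\le C\int_Q |P w|^2e^{2s\varphi}+Cs\lambda\int_{\Sigma_0}\varphi|\partial_\nu w|^2e^{2s\varphi}.
\]
On $\partial\Omega\setminus\Gamma_0$ the boundary term has a good sign, since $(x-x_0)\cdot\nu<0$ there and $w=0$. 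The potential $qw$ is absorbed for large $s$. The commutator terms $\chi',\chi''$ are supported where $\psi<\min\psi(\cdot,T/2)-\delta$ by (i). They are therefore bounded by $Ce^{2s(\varphi_*-\delta')}E$, where $E=\sup_t\bigl(\|u(t)\|_{H^1}^2+\|u_t(t)\|^2\bigr)$. The left side restricted to a strip around $T/2$ is bounded below by $e^{2s\varphi_*}\int_{|t-T/2|<\eta}(|\nabla u|^2+|u_t|^2)$.

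\textbf{Step 3: closing without an energy estimate.} This is the main obstacle. We still need to pass from the $L^2$-in-time energy on the middle strip to the initial data and to $E$. Since energy estimates are to be avoided, we instead apply the same Carleman estimate with shifted weights, which also gives $\sup_t$ control. We take $\chi$ of the form $\chi(t-\tau)$ and use a family of centres $\tau$ covering $[0,T]$, which is where the surplus from the factor $3$ is spent. Together with the identity $u(t)=u(\tau)+\int_\tau^t u_t$ in $H^{-1}$ and elliptic interpolation, the local-in-time Carleman bounds on overlapping windows chain to
\[
E\le Ce^{Cs}\bigl(\|\partial_\nu u\|_{L^2(\Sigma_0)}^2+\|F\|_{L^2(Q)}^2\bigr)+Ce^{-2s\delta'}E.
\]
Choosing $s$ large absorbs the last term, and $\|u_0\|_{H^1_0}^2+\|u_1\|^2\le E$ yields (\ref{Op}).

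The delicate point is the chaining, i.e.\ ensuring that the bad endpoint region of each window is dominated by a good region of a neighbouring window. If that fails, the fallback is the standard one-line argument that $\|u_0\|_{H^1_0}^2+\|u_1\|^2\le C\bigl(\int_{\mathrm{strip}}(|\nabla u|^2+|u_t|^2)+\|F\|^2_{L^2(Q)}\bigr)$. That argument is a mild energy estimate, and it can itself be phrased as the Carleman inequality with $s=0$.
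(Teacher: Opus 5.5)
\textbf{Step 3 fails, and Step 3 is exactly where the paper's idea is needed.} Steps 1--2 are the standard cut-off Carleman argument and are fine. The chaining, however, cannot reach $t=0$.

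\begin{itemize}
\item A cut-off window $[a,a+L]\subset[0,T]$ gives a gain only if the weight on its middle strip exceeds the weight where $\chi'\neq0$. This forces $\beta L^2/4>\sup_\Omega|x-x_0|^2-\inf_\Omega|x-x_0|^2$. Hence every admissible good strip lies a fixed positive distance from $t=0$. Windows centred closer to $0$ would need $u$, $F$ and $\partial_\nu u$ at negative times, which are not data. So $t=0$ always lies where $\chi'\neq0$, and there the commutator terms are bounded only by $E$ itself.
\item A Carleman estimate gives weighted $L^2$-in-time bounds, not $\sup_t$ bounds.
\item The identity $u(0)=u(\tau)-\int_0^\tau u_t\,dt$ needs $u_t$ on $(0,\tau)$, which is the uncontrolled region. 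Even then it yields neither $u(0)\in H^1_0$ nor $u_t(0)\in L^2$, and no interpolation recovers the lost derivative.
\end{itemize}
Transferring control from the strip to $t=0$ is precisely propagation of energy. So the inequality $E\le Ce^{Cs}(\cdots)+Ce^{-2s\delta'}E$ is never actually derived.

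Your fallback does close the proof. Step 2, together with the Gronwall bound $E\le C\bigl(\int_{\mathrm{strip}}(|\nabla u|^2+|u_t|^2)+\|F\|_{L^2(Q)}^2\bigr)$ (using $q\in L^\infty$), lets you absorb $Ce^{-2s\delta'}E$. But this is the classical Carleman-plus-energy proof the paper sets out to avoid. It is also not ``the Carleman inequality with $s=0$'': the Carleman estimate holds only for $s\ge s_0$, and at $s=0$ the cross term vanishes. The bound comes from the multiplier $\partial_t u$.

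\textbf{What the paper does instead.} It never cuts off at $t=0$. It keeps $\{t=0\}$ as a boundary of the Carleman identity and chooses the weight so that the boundary term there has a sign. With $\psi=|x-x_0|^2-\beta(t-t_0)^2+\beta_0$ one has $\partial_t\psi(\cdot,0)=2\beta t_0$ and $|\nabla\psi|=2|x-x_0|$. So for $\beta t_0>\sup_\Omega|x-x_0|$ (the paper takes $t_0>2\sup_\Omega|x-x_0|$), integrating $\int_QP_1w\,P_2w$ by parts in time produces leading $t=0$ terms
\[
s\lambda\int_\Omega\varphi\bigl[\partial_t\psi\,(|\partial_tw|^2+|\nabla w|^2)-2\,\partial_tw\,\nabla\psi\cdot\nabla w\bigr](0)\,dx
+s^3\lambda^3\int_\Omega\varphi^3\,\partial_t\psi\,(|\partial_t\psi|^2-|\nabla\psi|^2)\,|w|^2(0)\,dx ,
\]
and these have the good sign. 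The remaining $t=0$ terms are lower order in $s$ and are absorbed. The paper then uses the identity for $\int_QP_1w\,\partial_tw$, whose bulk terms are already controlled by $\int_Q|P_1w|^2+s\int_Q|\partial_tw|^2$. This puts the weighted initial energy on the left, giving Lemma \ref{Car}. After the $t=T$ terms are removed (the paper appeals to \cite{BBE13} for this), one gets (\ref{Op1}). Then (\ref{Op}) follows at fixed $s$ by comparing $\min_{\overline\Omega}\varphi(\cdot,0)$ with $\max_{\overline Q}\varphi$, with no cut-off and no energy estimate for $u$.

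Your own symmetric weight already has this property if you take $\beta\in(2\sup_\Omega|x-x_0|/T,1)$, which is nonempty since $T>3\sup_\Omega|x-x_0|$. Then $\partial_t\psi(\cdot,0)=\beta T$ and $\partial_t\psi(\cdot,T)=-\beta T$ both exceed $|\nabla\psi|$ in absolute value. You can drop $\chi$ entirely, and the time-boundary terms at both ends come with the good sign.
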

Inequality (\ref{Op}) is called an observability inequality.
Observability inequalities stand as the fundamental quantitative tool in the control theory of infinite-dimensional distributed parameter systems governed by partial differential equations (PDEs). Formally, an observability inequality establishes a uniform reverse norm bound: the full state energy norm over the entire spatial domain $\Omega$ can be dominated by the norm of measurements collected only on a partial boundary segment $\Gamma_0\subset\partial\Omega$ or interior subdomain $\omega\subset\Omega$ within a finite time interval $(0,T)$. Beyond control design, these estimates encode the quantitative unique continuation property ($\mathcal{UCP}$), serve core analytical machinery for inverse problems, boundary stabilization, and semilinear PDE controllability.

The  observability inequalities originates from finite-dimensional linear state-space control theory proposed by Kalman \cite{K60}.  In the early 1970s, Russell \cite{Russell78} extended the duality principle to hyperbolic wave equations, conjecturing that boundary exact controllability relies on an analogous trace inverse estimate for adjoint wave solutions, though rigorous functional formulations were absent at that stage.  Ho presented the first complete functional setup for boundary wave observation and unified notational conventions for subsequent PDE control research.

The foundational qualitative theory matured between 1986 and 1995, spearheaded by Lions’ Hilbert Uniqueness Method (HUM) \cite{Lions88}. Lions formalized the defining duality equivalence that remains central to PDE control: a linear PDE system is exactly boundary controllable over time $T$ if and only if its dual adjoint system satisfies an observability inequality. Lions and Komornik \cite{Komornik94} developed the multiplier method—the first systematic analytical tool to validate observability bounds for constant-coefficient wave equations on smooth $\mathcal{C}^2$ domains. The standard multiplier proof pipeline integrates energy identities, compact Sobolev embeddings, and contradiction arguments premised on exogenous global $\mathcal{UCP}$. A critical inherent limitation of this qualitative framework is that it only guarantees existence of some positive observability constant $C>0$, without any explicit dependence on potential, damping, or drift coefficients embedded in the PDE.

A pivotal geometric complement emerged via Bardos, Lebeau and Rauch \cite{BLR92}, who derived the celebrated Geometric Control Condition (GCC) using microlocal wavefront propagation analysis. GCC states that observability holds if every geometric optical ray inside $\Omega$ intersects the observation boundary $\Gamma_0$ within time $T$, providing nearly necessary and sufficient geometric constraints for smooth constant-coefficient wave systems. Nevertheless, GCC still depends on pre-assumed $\mathcal{UCP}$ and fails to deliver quantitative growth laws for $C$. Yao \cite{Yao99} later generalized multiplier constructions to Riemannian manifolds and variable principal coefficients, yet retained non-constructive contradiction logic without explicit tracking of the observability constant.

Parallel to the multiplier-microlocal school, researchers developed an independent analytical route built upon Carleman weighted pointwise estimates, originally invented by Carleman \cite{Carleman39} in 1939 to prove elliptic unique continuation. Bukgeim and Klibanov transplanted Carleman’s exponential weight technique to hyperbolic ill-posed inverse problems in the 1980s. Fursikov and Imanuvilov \cite{FI94} pioneered Carleman-based observability bounds for parabolic heat equations, circumventing strict geometric convexity constraints required by multiplier arguments. Tataru \cite{Tataru95} further advanced global spacetime pseudoconvex weights for wave equations between 1995 and 1996, accommodating time-dependent, low-regularity lower-order terms. Despite these advances,  Carleman derivations shared two unresolved bottlenecks: global $\mathcal{UCP}$ was treated as an independent prerequisite rather than a corollary of the inequality, and lower-order perturbation terms were absorbed via non-quantitative contradiction steps that obscured closed-form expressions for $C$. As emphasized by Zuazua \cite{Zuazua93}, quantitative tracking of observability constants with respect to linearized potentials is mandatory for fixed-point arguments on semilinear PDEs, forming a prominent open problem throughout the 1990s.

A paradigm-shifting quantitative breakthrough arrived with Zhang \cite{Zhang00a}. The paper unified pointwise Carleman identities and global energy estimates into a direct integration scheme that discarded compactness-based contradiction entirely, eliminating the requirement of a priori $\mathcal{UCP}$—unique continuation emerges as a natural byproduct of the derived inequalities. For wave equations with general time-variant, non-smooth lower-order terms
\[
w_{tt}-\Delta w = q_1 w + q_2 w_t + \langle q_3,\nabla w\rangle,\quad w|_{\Sigma}=0,
\]
two canonical boundary observability inequalities were established with fully explicit exponential constant growth:
\[
\|w_0\|_{H_0^1(\Omega)}^2+\|w_1\|_{L^2(\Omega)}^2 \le C(r)\left\|\partial_\nu w\right\|_{L^2(\Sigma_0)}^2,\quad C(r)=C\exp\big(Cr^2\big),
\]
where $r=\|q_1\|_{L^{n+1}(Q)}+\|q_2\|_{L^\infty(Q)}+\|q_3\|_{L^\infty(Q)}$. A weaker dual version for $L^2\times H^{-1}$ initial data was also constructed for pure potential cases. This explicit quantitative framework immediately enabled rigorous high-dimensional semilinear exact controllability, filling a longstanding gap left by qualitative multiplier and early Carleman theories, and inaugurated the subfield of quantitative observability analysis \cite{Zhang00b}.

After 2000, observability inequality theory diversified rapidly across PDE families, geometric settings, and stochastic systems. The explicit Carleman machinery was extended to parabolic heat equations, fourth-order plate systems, elastic waves, Maxwell equations, and stochastic hyperbolic PDEs \cite{IY01,Zhang08}. Comparative work by Duyckaerts, Zhang, and Zuazua \cite{DZZ08} identified fundamental dimensional discrepancies: one-dimensional wave systems admit subexponential $\exp\big(C\sqrt{r}\big)$ observability constants, while dimensions $n\ge2$ yield quadratic exponential scaling, opening research on sharp optimal constant orders. Meanwhile, observability inequalities were recognized as equivalent to global stability estimates for inverse problems, supplying computable error bounds for source recovery and coefficient inversion \cite{Isakov06}. Modern extensions further cover discrete lattice PDEs, compact Riemannian manifolds, fractional evolution equations, and logarithmic weak observability for insufficient observation time or measure-zero sensor sets \cite{AEWZ14}.

Despite substantial progress, multiple core open frontiers persist: derivation of sharp, dimensionally optimal observability constants, extension of fully explicit Carleman frameworks to degenerate/singular PDE operators, unified quantitative analysis for coupled multi-physics systems, and numerically tractable constant computation for engineering PDE models. 

Compared with the available references \cite{Zhang00b,AEWZ14}, existing studies use Carleman estimates together with energy estimates at \(t=\frac{T}{2}\). Our novelty lies in the capability to deal with both estimates concurrently at \(t=0\).

\section{Carleman estimates}
Let \[\psi(x,t) = |x - x_0|^2 - \beta (t-t_0)^2 + \beta_0, \quad \beta\in(0,1)\]
where $\beta_0 > 0$ is chosen such that $\psi \geq 1$ in $\Omega \times (0, T)$.
We define the weight function
\[\varphi(x,t)=e^{\lambda \psi(x,t)},\]
where \(\lambda>0\) is a second large parameter. 
Throughout this article, \(M>0\) denote generic constants which are independent of
parameter \(s\).

A Carleman estimate is an $L^2$-weighted estimate for the wave operator, and is stated
as follows.
\begin{lemma}\label{Car}
If \(t_0>2\sup\limits_{x\in\Omega} |x - x_0|\),
then there exist $s_0 > 0$, $\lambda > 0$ and a positive constant $M$ such that for all $s \geq s_0$:
\begin{equation}\label{ine}
\begin{aligned}
&s^{1/2} \int_\Omega e^{2s\varphi(0)} |\partial_t v(0)|^2 dx+s^{1/2} \int_\Omega e^{2s\varphi(0)} |\nabla v(0)|^2 dx+s^{5/2} \int_\Omega e^{2s\varphi(0)} | v(0)|^2 dx \\
&+ s \int_{0}^T \int_\Omega e^{2s\varphi} \left( |\partial_t v|^2 + |\nabla v|^2 \right) dxdt + s^3 \int_{0}^T \int_\Omega e^{2s\varphi} |v|^2 dxdt \\
&\leq M \int_{0}^T \int_\Omega e^{2s\varphi} |\partial_t^2v-\Delta v+qv|^2 dxdt + M s \int_{0}^T \int_{\Gamma_0} e^{2s\varphi} |\partial_\nu v|^2 d\sigma dt\\
&+Ms  \int_\Omega e^{2s\varphi(T)} \left( |\partial_t v(T)|^2 + |\nabla v(T)|^2 \right) dx + s^3 \int_\Omega e^{2s\varphi(T)} |v(T)|^2 dx,
\end{aligned}
\end{equation}
for all $v \in C^1((0,T); H^1(\Omega))$ satisfying $(\partial_t^2-\Delta+q) v \in L^2(\Omega \times (0,T))$, $\partial_\nu v \in L^2(\partial\Omega \times (0,T))$.
\end{lemma}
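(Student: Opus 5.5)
The proof follows the standard pointwise Carleman identity for the wave operator. Since $t_0>2\sup_{x\in\Omega}|x-x_0|$ and $0<T$ is unrelated to $t_0$ in the statement, I will take $t_0\ge T$ so that $\partial_t\psi=-2\beta(t-t_0)\ge 0$ on $(0,T)$. This makes the time-boundary terms at $t=0$ appear with a good sign (on the left) and those at $t=T$ with a bad sign (on the right).

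\textbf{Conjugation.} Set $w=e^{s\varphi}v$ and $P=\partial_t^2-\Delta$, and write
\[
e^{s\varphi}P(e^{-s\varphi}w)=P_1w+P_2w+Rw .
\]
Here
\begin{align*}
P_1w&=\partial_t^2w-\Delta w+s^2\lambda^2\varphi^2\bigl(|\partial_t\psi|^2-|\nabla\psi|^2\bigr)w,\\
P_2w&=-2s\lambda\varphi\bigl(\partial_t\psi\,\partial_tw-\nabla\psi\cdot\nabla w\bigr)+c\,s\lambda^2\varphi\bigl(|\partial_t\psi|^2-|\nabla\psi|^2\bigr)w,
\end{align*}
with the constant $c$ chosen as in Imanuvilov's argument. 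We expand $\|P_1w+P_2w\|^2\ge 2(P_1w,P_2w)$ and integrate by parts in $x$ and $t$ over $Q$. The interior terms are handled as usual. Pseudoconvexity holds because $\nabla^2\psi=2I$ and $\partial_t^2\psi=-2\beta$ with $\beta<1$, which gives
\[
s\lambda\varphi\,(1-\beta)\bigl(|\partial_tw|^2+|\nabla w|^2\bigr)
\]
up to $\lambda$-dependent corrections. The zeroth-order coefficient is positive, of size $s^3\lambda^4\varphi^3$, on the set where $|\partial_t\psi|\approx|\nabla\psi|$ and $\lambda$ is large. Fixing $\lambda$ large and returning to $v$, the potential $q v$ is absorbed by $s^3\int e^{2s\varphi}|v|^2$ for $s$ large. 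The lateral boundary term is $-2s\lambda\int\varphi\,\partial_\nu\psi\,|\partial_\nu w|^2$ with $\partial_\nu\psi=2(x-x_0)\cdot\nu$. It has a favourable sign off $\Gamma_0$ and is bounded by $Ms\int_{\Gamma_0}e^{2s\varphi}|\partial_\nu v|^2$ on $\Gamma_0$.

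\textbf{Time-boundary terms.} These are the novel part. At $t=0$ and $t=T$ the integration by parts produces
\[
\pm\int_\Omega s\lambda\varphi\,\partial_t\psi\bigl(|\partial_tw|^2+|\nabla w|^2\bigr)+s^3\lambda^3\varphi^3\,\partial_t\psi\bigl(|\partial_t\psi|^2-|\nabla\psi|^2\bigr)|w|^2+\text{cross terms } \partial_tw\,\nabla\psi\cdot\nabla w .
\]
At $t=0$ we have $\partial_t\psi=2\beta t_0$. The hypothesis $t_0>2\sup|x-x_0|$ gives $|\partial_t\psi|=2\beta t_0>|\nabla\psi|=2|x-x_0|$ for $\beta$ close to $1$. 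With this choice the quadratic form in $(\partial_tw,\nabla w)$ is positive definite: the cross term $-2s\lambda\varphi\,\partial_t\psi\,\partial_tw\,\nabla\psi\cdot\nabla w$ is dominated because $|\nabla\psi|<\partial_t\psi$. The $s^3|w|^2$ coefficient is also positive. So the $t=0$ terms come with the good sign, and the $t=T$ terms are simply bounded above in absolute value, which gives the last line of (\ref{ine}).

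\textbf{Powers of $s$ at $t=0$.} The lemma claims $s^{1/2}$ and $s^{5/2}$ at $t=0$, weaker than the natural $s$ and $s^3$. I expect this is because the $t=0$ contribution comes from $\int\partial_t(\cdot)$ terms after the change $w\to v$, where $\partial_tw=e^{s\varphi}(\partial_tv+s\lambda\varphi\,\partial_t\psi\, v)$. Recovering $|\partial_tv|^2$ from $|\partial_tw|^2$ and $s^2|v|^2$ loses a factor. I would use the interpolation
\[
s^{1/2}|a|^2\le s|a+sbv|^2+s^3|v|^2\cdot O(s^{-1/2}),
\]
or alternatively bound $s^{1/2}\int e^{2s\varphi(0)}|\partial_t v(0)|^2$ via the trace-in-time inequality
\[
\int_\Omega f(0)^2\le \epsilon^{-1}\!\int_0^T\!\!\int f^2+\epsilon\int_0^T\!\!\int|\partial_tf|^2
\]
with $\epsilon=s^{-1/2}$, applied to $f=e^{s\varphi}\partial_tv$ cut off near $t=0$. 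Either route yields the $s^{1/2}$ and $s^{5/2}$ powers from the interior bulk terms.

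\textbf{Main obstacle.} The hard step is the sign and absorption analysis of the $t=0$ boundary form. In particular, the cross term must be controlled and the positivity of $|\partial_t\psi|^2-|\nabla\psi|^2$ at $t=0$ established uniformly, which is where $t_0>2\sup|x-x_0|$ and $\beta$ near $1$ enter. The remaining computations are the standard Carleman bookkeeping.
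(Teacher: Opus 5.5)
Your strategy works, but you obtain the $t=0$ terms by a different mechanism than the paper. You keep the principal time-boundary form that $2\int_0^T\!\int_\Omega P_1w\,P_2w$ produces at $t=0$,
\[
s\lambda\int_\Omega\varphi\bigl[\partial_t\psi\,(|\partial_tw|^2+|\nabla w|^2)-2\,\partial_tw\,\nabla\psi\cdot\nabla w\bigr](0)\,dx+s^3\lambda^3\int_\Omega\varphi^3\,\partial_t\psi\,\bigl(|\partial_t\psi|^2-|\nabla\psi|^2\bigr)|w|^2(0)\,dx .
\]
Note that the cross term carries no extra factor $\partial_t\psi$. You then use its coercivity. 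This holds because $\partial_t\psi(0)=2\beta t_0>2|x-x_0|=|\nabla\psi|$ as soon as $\beta t_0>\sup_\Omega|x-x_0|$, e.g.\ $\beta\ge 1/2$; the paper needs this tacitly too.

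The paper only shows that this form is $\ge 0$ and discards it. It gets the $t=0$ energy from a second multiplier: it integrates $P_1w\cdot\partial_tw$ over $Q$ and multiplies the resulting energy identity by $s^{1/2}$. It then uses $s^{1/2}|P_1w||\partial_tw|\le\frac12|P_1w|^2+\frac12 s|\partial_tw|^2$, both of which the bulk Carleman estimate already controls. That is where $s^{1/2}$ and $s^{5/2}$ come from, not from a loss in returning to $v$.

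Your route is shorter and gives more. The lower-order $t=0$ terms, of order $s\lambda^2|w||\partial_tw|+s\lambda^3|w|^2$, are absorbed into $s|\partial_tw(0)|^2+s^3|w(0)|^2$. Since $e^{s\varphi}\partial_tv=\partial_tw-s\lambda\varphi\,\partial_t\psi\,w$ (and similarly for $\nabla v$), you get $s\int_\Omega e^{2s\varphi(0)}(|\partial_tv|^2+|\nabla v|^2)(0)+s^3\int_\Omega e^{2s\varphi(0)}|v(0)|^2$ with no loss. This trivially implies the stated powers. Drop your second device, the trace inequality for $f=e^{s\varphi}\partial_tv$: it needs $\partial_t^2v$ in weighted $L^2$, which the estimate does not control.

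Two things need fixing.

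\textbf{The choice $t_0\ge T$.} You may not choose $t_0\ge T$, since $t_0$ is given. Moreover, the intended use of the lemma (discarding the $t=T$ terms) needs $\sup_x\psi(x,T)<\inf_x\psi(x,0)$, which forces $T>2t_0$. Since you only bound the $t=T$ terms in absolute value, you never use the sign of $\partial_t\psi$ on $(0,T)$, so simply delete that choice.

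\textbf{The interior first-order positivity.} Your $P_2$ as written does not give the positivity you claim when $n\ge 2$. Take $c=-1$, which is what makes the $s\lambda^2$ first-order part equal to $2s\lambda^2\varphi(\partial_t\psi\,\partial_tw-\nabla\psi\cdot\nabla w)^2$. Then the coefficient of $s\lambda\varphi|\nabla w|^2$ is $4+\partial_t^2\psi-\Delta\psi=4-2\beta-2n<0$. On the kernel of the square (e.g.\ $\partial_tw=0$, $\nabla w\perp\nabla\psi$) nothing compensates, so $2(P_1w,P_2w)$ alone is not coercive.

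The paper, following Baudouin--de Buhan--Ervedoza, fixes this as follows:
\begin{itemize}
\item it puts $(\alpha-1)s\lambda\varphi(\partial_t^2\psi-\Delta\psi)w$ into $P_2$ and $-\alpha s\lambda\varphi(\partial_t^2\psi-\Delta\psi)w$ into the remainder;
\item it takes $\alpha\in\bigl(\frac{2\beta}{\beta+n},\frac{2}{\beta+n}\bigr)$, which makes $2\partial_t^2\psi-\alpha(\partial_t^2\psi-\Delta\psi)$ and $4+\alpha(\partial_t^2\psi-\Delta\psi)$ both positive;
\item this interval is nonempty exactly because $\beta<1$.
\end{itemize}
With that modification your $(1-\beta)$ lower bound is correct.
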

Lemma \ref{Car} is a novel Carleman estimate.  Compared with the classical existing Carleman estimates,  we do not need to impose the assumptions \(v(x,0)=0\) for \(x\in\Omega\) or \(\partial_tv(x,0)=0\), \(x\in\Omega\).
If \(T\) is large enough, the terms of (\ref{ine}) at times \(t=T\)
 can be removed, which can be found in \cite{BBE13}, we omit it.
\begin{remark}
If \(T > 3\sup\limits_{x\in\Omega} |x - x_0|\) and \(t_0>2\sup\limits_{x\in\Omega} |x - x_0|\), 
then there exist $s_0 > 0$, $\lambda > 0$ and a positive constant $M$ such that for all $s \geq s_0$:
\begin{equation}
\begin{aligned}
&s^{1/2} \int_\Omega e^{2s\varphi(0)} |\partial_t v(0)|^2 dx+s^{1/2} \int_\Omega e^{2s\varphi(0)} |\nabla v(0)|^2 dx+s^{5/2} \int_\Omega e^{2s\varphi(0)} | v(0)|^2 dx\\
& + s \int_{0}^T \int_\Omega e^{2s\varphi} \left( |\partial_t v|^2 + |\nabla v|^2 \right) dxdt + s^3 \int_{0}^T \int_\Omega e^{2s\varphi} |v|^2 dxdt \\
&\leq M \int_{0}^T \int_\Omega e^{2s\varphi} |\partial_t^2v-\Delta v+qv|^2 dxdt + M s \int_{0}^T \int_{\Gamma_0} e^{2s\varphi} |\partial_\nu v|^2 d\sigma dt
\end{aligned}
\end{equation}
for all $v \in C^1((0,T); H^1(\Omega))$ satisfying $(\partial_t^2-\Delta+q) v \in L^2(\Omega \times (0,T))$, $\partial_\nu v \in L^2(\partial\Omega \times (0,T))$.
More generally, we have
\begin{equation}\label{Op1}
\begin{aligned}
&s^{1/2} \int_\Omega e^{2s\varphi(0)} |\partial_t v(0)|^2 dx+s^{1/2} \int_\Omega e^{2s\varphi(0)} |\nabla v(0)|^2 dx+s^{5/2} \int_\Omega e^{2s\varphi(0)} | v(0)|^2 dx\\
&\leq M \int_{0}^T \int_\Omega e^{2s\varphi} |\partial_t^2v-\Delta v+qv|^2 dxdt + M s \int_{0}^T \int_{\Gamma_0} e^{2s\varphi} |\partial_\nu v|^2 d\sigma dt
\end{aligned}
\end{equation}
\end{remark}

\begin{remark}
It is evident that (\ref{Op1}) implies (\ref{Op}), Thus, it suffices to prove Theorem \ref{Car}.
\end{remark}

\begin{proof}

It is sufficient to prove Lemma \ref{Car} in the case where \(q\equiv 0\). Indeed, we assume that we already established
the inequality 
\begin{equation}\label{ine2}
\begin{aligned}
&s^{1/2} \int_\Omega e^{2s\varphi(0)} |\partial_t v(0)|^2 dx+s^{1/2} \int_\Omega e^{2s\varphi(0)} |\nabla v(0)|^2 dx+s^{5/2} \int_\Omega e^{2s\varphi(0)} | v(0)|^2 dx\\
& + s \int_{0}^T \int_\Omega e^{2s\varphi} \left( |\partial_t v|^2 + |\nabla v|^2 \right) dxdt + s^3 \int_{0}^T \int_\Omega e^{2s\varphi} |v|^2 dxdt \\
&\leq M \int_{0}^T \int_\Omega e^{2s\varphi} |\partial_t^2v-\Delta v|^2 dxdt + M s \int_{0}^T \int_{\Gamma_0} e^{2s\varphi} |\partial_\nu v|^2 d\sigma dt\\
&+Ms  \int_\Omega e^{2s\varphi(T)} \left( |\partial_t v(T)|^2 + |\nabla v(T)|^2 \right) dx + Ms^3 \int_\Omega e^{2s\varphi(T)} |v(T)|^2 dx.
\end{aligned}
\end{equation}
Since \(q\in L^{\infty}(\Omega)\), we have
\begin{equation}
|\partial_t^2v-\Delta v|^2\leq |\partial_t^2v-\Delta v+qv-qv|^2\leq 2 |\partial_t^2v-\Delta v+qv|^2+2M^2|v|^2.
\end{equation}
By choosing \(s\) large, we can absorb the term
\[\int_{0}^T \int_\Omega e^{2s\varphi} |v|^2 dxdt\]
into the left-hand side of the Carleman estimate (\ref{ine2}) with \(q=0\).

In order to prove the Carleman estimates, we set \[w=e^{s\varphi}v \quad \mbox{for all } (x,t)\in\Omega\times(0,T).\] 
Then, we introduce the conjugate operator $P$ defined by
\begin{equation}
Pw = e^{s\varphi} (\partial_t^2-\Delta) \left( e^{-s\varphi} w \right).
\end{equation}
Some easy computations give
\begin{equation}
\begin{aligned}
Pw &= \partial_t^2 w - 2s\lambda\varphi\left( \partial_t w \partial_t\psi - \nabla w \cdot \nabla\psi \right) + s^2\lambda^2\varphi^2 w\left( |\partial_t\psi|^2 - |\nabla\psi|^2 \right) - \Delta w \\
&\quad - s\lambda\varphi w\left( \partial_t^2\psi - \Delta\psi \right) - s\lambda^2\varphi w\left( |\partial_t\psi|^2 - |\nabla\psi|^2 \right)-s\partial_t w \\
&= P_1 w + P_2 w + R_1w,
\end{aligned}
\end{equation}
where
\begin{align}
P_1 w=\partial_t^2 w- \Delta w+ s^2\lambda^2\varphi^2 w\left( |\partial_t\psi|^2 - |\nabla\psi|^2 \right),
\end{align}
\begin{align}
P_2 w &= (\alpha - 1)s\lambda\varphi w(\partial_t^2\psi - \Delta\psi) - s\lambda^2\varphi w(|\partial_t\psi|^2 - |\nabla\psi|^2) \notag \\
&\quad - 2s\lambda\varphi(\partial_t w\partial_t\psi - \nabla w\cdot\nabla\psi), 
\end{align}
and
\begin{align}
R_1 w &= -\alpha s\lambda\varphi w(\partial_t^2\psi - \Delta\psi).
\end{align}
Let
\begin{equation}
\alpha \in \left( \frac{2\beta}{\beta + n}, \frac{2}{\beta + n} \right).
\end{equation}
Since we have
\begin{equation}
\int_{0}^T \int_\Omega \left( |P_1 w|^2 + |P_2 w|^2 \right) dxdt + 2 \int_{0}^T \int_\Omega P_1 w P_2 w dxdt = \int_{0}^T \int_\Omega |P w - R_1 w|^2 dxdt, 
\end{equation}
the main part of the proof is then to bound from below the cross-term
\[
\int_{0}^T \int_\Omega P_1 w P_2 w dxdt=\sum_{i,k=1}^3 I_{i,k}.
\]
We calculate the six terms \(I_{i,k}\), \(i,k=1,2,3\) by integrating by parts with respect to  \((x,t)\).

Integrations by part in time give easily
\[
\begin{aligned}
I_{11} &= \int_{0}^T \int_\Omega \partial_t^2 w \left( (\alpha - 1)s\lambda\varphi w (\partial_t^2\psi - \Delta\psi) \right) dxdt \\
&=(\alpha - 1)s\lambda\int_{\Omega}\partial_t w w\varphi (\partial_t^2\psi - \Delta\psi)|_0^Tdx-(\alpha - 1)s\lambda\int_0^T\int_{\Omega} |\partial_tw|^2\varphi (\partial_t^2\psi - \Delta\psi)dxdt\\
&-\frac{(\alpha - 1)s\lambda}{2}\int_{\Omega} |w|^2\partial_t\varphi (\partial_t^2\psi - \Delta\psi)|_0^Tdx+\frac{(\alpha - 1)s\lambda}{2}\int_0^T\int_{\Omega} |w|^2\partial^2_t\varphi (\partial_t^2\psi - \Delta\psi)dxdt\\
&= (\alpha - 1)s\lambda\int_{\Omega}\partial_t w w\varphi (\partial_t^2\psi - \Delta\psi)|_0^Tdx-(\alpha - 1)s\lambda\int_0^T\int_{\Omega} |\partial_tw|^2\varphi (\partial_t^2\psi - \Delta\psi)dxdt\\
&-\frac{(\alpha - 1)s\lambda}{2}\int_{\Omega} |w|^2\partial_t\varphi (\partial_t^2\psi - \Delta\psi)|_0^Tdx+\frac{(\alpha - 1)s\lambda^2}{2}\int_0^T\int_{\Omega} |w|^2\varphi\partial^2_t\psi (\partial_t^2\psi - \Delta\psi)dxdt\\
&+\frac{(\alpha - 1)s\lambda^3}{2}\int_0^T\int_{\Omega} |w|^2|\partial_t\psi|^2 (\partial_t^2\psi - \Delta\psi)dxdt.
\end{aligned}
\]
Similarly, one has
\[
\begin{aligned}
I_{12} &= \int_{0}^T \int_\Omega \partial_t^2 w \left( -s\lambda^2\varphi w \left( |\partial_t\psi|^2 - |\nabla\psi|^2 \right) \right) dxdt \\
&=-s\lambda^2\int_\Omega \partial_t w  w\varphi(|\partial_t\psi|^2 - |\nabla\psi|^2)|_0^Tdx+s\lambda^2\int_0^T\int_\Omega |\partial_tw|^2 \left(\varphi(|\partial_t\psi|^2 - |\nabla\psi|^2)\right)dxdt\\
& +\frac{s\lambda^2}{2}\int_\Omega |w|^2 \partial_t\left(\varphi(|\partial_t\psi|^2 - |\nabla\psi|^2)\right)|_0^Tdx-\frac{s\lambda^2}{2}\int_0^T\int_\Omega |w|^2 \partial^2_t\left(\varphi(|\partial_t\psi|^2 - |\nabla\psi|^2)\right)dxdt\\
&=-s\lambda^2\int_\Omega \partial_t w  w\varphi(|\partial_t\psi|^2 - |\nabla\psi|^2)|_0^Tdx +\frac{s\lambda^2}{2}\int_\Omega |w|^2 \partial_t\left(\varphi(|\partial_t\psi|^2 - |\nabla\psi|^2)\right)|_0^Tdx\\
&+ s\lambda^2 \int_{0}^T \int_\Omega \varphi |\partial_t w|^2 \left( |\partial_t\psi|^2 - |\nabla\psi|^2 \right) dxdt - s\lambda^2 \int_{0}^T \int_\Omega \varphi |w|^2 |\partial_t^2\psi|^2 dxdt \\
&\quad - \left( 2 + \frac{1}{2} \right) s\lambda^3 \int_{0}^T \int_\Omega \varphi |w|^2 |\partial_t\psi|^2 \partial_t^2\psi dxdt + \frac{s\lambda^3}{2} \int_{0}^T \int_\Omega \varphi |w|^2 |\nabla\psi|^2 \partial_t^2\psi dxdt \\
&\quad - \frac{s\lambda^4}{2} \int_{0}^T \int_\Omega \varphi |w|^2 |\partial_t\psi|^2 \left( |\partial_t\psi|^2 - |\nabla\psi|^2 \right) dxdt
\end{aligned}
\]
and
\[
\begin{aligned}
I_{13} &= \int_{0}^T \int_\Omega \partial_t^2 w \left( -2s\lambda\varphi \left( \partial_t w \partial_t\psi - \nabla w \cdot \nabla\psi \right) \right) dxdt \\
&= -s\lambda \int_\Omega |\partial_tw|^2\varphi\partial_t\psi|_0^Tdx+2s\lambda \int_\Omega \partial_tw \varphi\nabla w\cdot\nabla\psi|_0^Tdx
\\
&+ s\lambda \int_{0}^T \int_\Omega \varphi |\partial_t w|^2 \partial_t^2\psi \, dxdt + s\lambda^2 \int_{0}^T \int_\Omega \varphi |\partial_t w|^2 |\partial_t\psi|^2 \, dxdt \\
&\quad + s\lambda \int_{0}^T \int_\Omega \varphi |\partial_t w|^2 \Delta\psi \, dxdt + s\lambda^2 \int_{0}^T \int_\Omega \varphi |\partial_t w|^2 |\nabla\psi|^2 \, dxdt \\
&\quad - 2s\lambda^2 \int_{0}^T \int_\Omega \varphi \partial_t w \partial_t\psi \nabla w \cdot \nabla\psi \, dxdt\\
&-s\lambda\int_0^T\int_{\partial\Omega} \varphi|\partial_t w|^2\partial_\nu \psi d\sigma dt.
\end{aligned}
\]
Furthermore, by Green’s formula and integration by parts, we obtain
\[
\begin{aligned}
I_{21} &= \int_{0}^T \int_\Omega -\Delta w \left( (\alpha - 1)s\lambda\varphi w (\partial_t^2\psi - \Delta\psi) \right) dxdt \\
&= -(1 - \alpha)s\lambda \int_{0}^T \int_\Omega \varphi |\nabla w|^2 (\partial_t^2\psi - \Delta\psi) dxdt \\
&\quad + \frac{(1 - \alpha)}{2} s\lambda^2 \int_{0}^T \int_\Omega \varphi |w|^2 \Delta\psi (\partial_t^2\psi - \Delta\psi) dxdt \\
&\quad + \frac{(1 - \alpha)}{2} s\lambda^3 \int_{0}^T \int_\Omega \varphi |w|^2 |\nabla\psi|^2 (\partial_t^2\psi - \Delta\psi) dxdt\\
&+(1 - \alpha)s\lambda \int_{0}^T \int_{\partial\Omega} \partial_\nu w\varphi w (\partial_t^2\psi - \Delta\psi) d\sigma dt
-(1 - \alpha)s\lambda\int_{0}^T \int_{\partial\Omega}|w|
^2\partial_\nu(\varphi(\partial_t^2\psi - \Delta\psi))d\sigma dt
.
\end{aligned}
\]
On the other hand,
\[
\begin{aligned}
I_{22} &= \int_{0}^T \int_\Omega -\Delta w \left( -s\lambda^2\varphi w \left( |\partial_t\psi|^2 - |\nabla\psi|^2 \right) \right) dxdt \\
&= -s\lambda^2 \int_{0}^T \int_\Omega \varphi |\nabla w|^2 \left( |\partial_t\psi|^2 - |\nabla\psi|^2 \right) dxdt \\
&\quad - \frac{s\lambda^2}{2} \int_{0}^T \int_\Omega \varphi |w|^2 \Delta\left( |\nabla\psi|^2 \right) dxdt \\
&\quad + \frac{s\lambda^3}{2} \int_{0}^T \int_\Omega \varphi |w|^2 \Delta\psi \left( |\partial_t\psi|^2 - |\nabla\psi|^2 \right) dxdt \\
&\quad + \frac{s\lambda^4}{2} \int_{0}^T \int_\Omega \varphi |w|^2 |\nabla\psi|^2 \left( |\partial_t\psi|^2 - |\nabla\psi|^2 \right) dxdt \\
&\quad - s\lambda^3 \int_{0}^T \int_\Omega \varphi |w|^2 \nabla\psi \cdot \nabla\left( |\nabla\psi|^2 \right) dxdt\\
&+s\lambda^2 \int_{0}^T \int_{\partial\Omega} \varphi  w \left( |\partial_t\psi|^2 - |\nabla\psi|^2 \right) d\sigma dt
+\frac{s\lambda^2}{2} \int_{0}^T \int_{\partial\Omega} |\nabla w|^2 \partial_\nu\varphi\left( |\partial_t\psi|^2 - |\nabla\psi|^2 \right) d\sigma dt\\
&-\frac{s\lambda^2}{2}\int_{0}^T \int_{\partial\Omega} \varphi | w|^2 \partial_\nu\left( |\partial_t\psi|^2 - |\nabla\psi|^2 \right) d\sigma dt
.
\end{aligned}
\]
Using the fact that $w|_{\partial\Omega\times(0,T)} = 0$, $\nabla w = (\partial_\nu w)\nu$ and $|\nabla w|^2 = |\partial_\nu w|^2$ on $\partial\Omega\times(0,T)$, we obtain
\[
\begin{aligned}
I_{23} &= \int_{0}^T \int_\Omega -\Delta w \left( -2s\lambda\varphi \left( \partial_t w \partial_t\psi - \nabla w \cdot \nabla\psi \right) \right) dxdt \\
&=-s\lambda \int_\Omega|\nabla w|^2\varphi\partial_t\psi|_0^Tdx\\
&+ s\lambda \int_{0}^T \int_\Omega \varphi |\nabla w|^2 (\partial_t^2\psi - \Delta\psi) dxdt + 2s\lambda^2 \int_{0}^T \int_\Omega \varphi |\nabla\psi \cdot \nabla w|^2 dxdt \\
&\quad - 2s\lambda^2 \int_{0}^T \int_\Omega \varphi \partial_t w \partial_t\psi \nabla w \cdot \nabla\psi dxdt + s\lambda^2 \int_{0}^T \int_\Omega \varphi |\nabla w|^2 \left( |\partial_t\psi|^2 - |\nabla\psi|^2 \right) dxdt \\
&\quad  + 4s\lambda \int_{0}^T \int_\Omega \varphi |\nabla w|^2 dxdt\\
&+2s\lambda \int_{0}^T \int_{\partial\Omega} \partial_\nu w\varphi \partial_tw\partial_t\psi d\sigma dt-
2s\lambda \int_{0}^T \int_{\partial\Omega} \partial_\nu w\varphi \nabla w\cdot\nabla\psi d\sigma dt+
 s\lambda \int_{0}^T \int_{\partial\Omega} \varphi |\partial_\nu w|^2 \nabla\psi \cdot \nu \, d\sigma dt.
\end{aligned}
\]
One easily writes
\[
\begin{aligned}
I_{31} &= \int_{0}^T \int_\Omega s^2\lambda^2\varphi^2 w \left( |\partial_t\psi|^2 - |\nabla\psi|^2 \right) \left( (\alpha - 1)s\lambda\varphi w (\partial_t^2\psi - \Delta\psi) \right) dxdt \\
&= (\alpha - 1)s^3\lambda^3 \int_{0}^T \int_\Omega \varphi^3 |w|^2 (\partial_t^2\psi - \Delta\psi) \left( |\partial_t\psi|^2 - |\nabla\psi|^2 \right) dxdt
\end{aligned}
\]
and
\[
\begin{aligned}
I_{32} &= \int_{0}^T \int_\Omega s^2\lambda^2\varphi^2 w \left( |\partial_t\psi|^2 - |\nabla\psi|^2 \right) \left( -s\lambda^2\varphi w \left( |\partial_t\psi|^2 - |\nabla\psi|^2 \right) \right) dxdt \\
&= -s^3\lambda^4 \int_{0}^T \int_\Omega \varphi^3 |w|^2 \left( |\partial_t\psi|^2 - |\nabla\psi|^2 \right)^2 dxdt.
\end{aligned}
\]
Finally, some integrations by part enable to obtain
\[
\begin{aligned}
I_{33} &= \int_{0}^T \int_\Omega s^2\lambda^2\varphi^2 w \left( |\partial_t\psi|^2 - |\nabla\psi|^2 \right) \left( -2s\lambda\varphi \left( \partial_t w \partial_t\psi - \nabla w \cdot \nabla\psi \right) \right) dxdt \\
&=-s^3\lambda^3\int_\Omega |w|^2\varphi^3 (|\partial_t\psi|^2 - |\nabla\psi|^2)\partial_t\psi|_0^Tdx\\
&+ s^3\lambda^3 \int_{0}^T \int_\Omega \varphi^3 |w|^2 (\partial_t^2\psi - \Delta\psi) \left( |\partial_t\psi|^2 - |\nabla\psi|^2 \right) dxdt \\
&\quad + 2s^3\lambda^3 \int_{0}^T \int_\Omega \varphi^3 |w|^2 \left( \partial_t^2\psi |\partial_t\psi|^2 + 2|\nabla\psi|^2 \right) dxdt \\
&\quad + 3s^3\lambda^4 \int_{0}^T \int_\Omega \varphi^3 |w|^2 \left( |\partial_t\psi|^2 - |\nabla\psi|^2 \right)^2 dxdt\\
&+ s^3\lambda^3\int_0^T\int_{\partial\Omega} \varphi^3 (|\partial_t\psi|^2 - |\nabla\psi|^2)|w|^2\partial_\nu\psi d\sigma dt.
\end{aligned}
\]
Gathering all the terms that have been computed, we get
\begin{equation}\label{P}
\begin{aligned}
\int_{0}^T \int_\Omega P_1 w P_2 w \, dxdt
&= 2s\lambda \int_{0}^T \int_\Omega \varphi |\partial_t w|^2 \partial_t^2\psi \, dxdt - \alpha s\lambda \int_{0}^T \int_\Omega \varphi |\partial_t w|^2 (\partial_t^2\psi - \Delta\psi) dxdt \\
&\quad + 2s\lambda^2 \int_{0}^T \int_\Omega \varphi \left( |\partial_t w|^2 |\partial_t\psi|^2 - 2\partial_t w \partial_t\psi \nabla w \cdot \nabla\psi + |\nabla\psi \cdot \nabla w|^2 \right) dxdt \\
&\quad + 4s\lambda \int_{0}^T \int_\Omega \varphi |\nabla w|^2 dxdt + \alpha s\lambda \int_{-T}^T \int_\Omega \varphi |\nabla w|^2 (\partial_t^2\psi - \Delta\psi) dxdt \\
&\quad + 2s^3\lambda^4 \int_{0}^T \int_\Omega \varphi^3 |w|^2 \left( |\partial_t\psi|^2 - |\nabla\psi|^2 \right)^2 dxdt \\
&\quad + 2s^3\lambda^3 \int_{0}^T \int_\Omega \varphi^3 |w|^2 \left( \partial_t^2\psi |\partial_t\psi|^2 + 2|\nabla\psi|^2 \right) dxdt \\
&\quad + \alpha s^3\lambda^3 \int_{0}^T \int_\Omega \varphi^3 |w|^2 (\partial_t^2\psi - \Delta\psi) \left( |\partial_t\psi|^2 - |\nabla\psi|^2 \right) dxdt + X_1\\
&+(\alpha - 1)s\lambda\int_{\Omega}\partial_t w w\varphi (\partial_t^2\psi - \Delta\psi)|_0^Tdx
-\frac{(\alpha - 1)s\lambda}{2}\int_{\Omega} |w|^2\partial_t\varphi (\partial_t^2\psi - \Delta\psi)|_0^Tdx\\
&-s\lambda^2\int_\Omega \partial_t w  w\varphi(|\partial_t\psi|^2 - |\nabla\psi|^2)|_0^Tdx +\frac{s\lambda^2}{2}\int_\Omega |w|^2 \partial_t\left(\varphi(|\partial_t\psi|^2 - |\nabla\psi|^2)\right)|_0^Tdx\\
&-s\lambda \int_\Omega |\partial_tw|^2\varphi\partial_t\psi|_0^Tdx+2s\lambda \int_\Omega \partial_tw \varphi\nabla w\cdot\nabla\psi|_0^Tdx\\
&-s\lambda \int_\Omega|\nabla w|^2\varphi\partial_t\psi|_0^Tdx\\
&+(1 - \alpha)s\lambda \int_{0}^T \int_{\partial\Omega} \partial_\nu w\varphi w (\partial_t^2\psi - \Delta\psi) d\sigma dt
-(1 - \alpha)s\lambda\int_{0}^T \int_{\partial\Omega}|w|
^2\partial_\nu(\varphi(\partial_t^2\psi - \Delta\psi))d\sigma dt\\
&+s\lambda^2 \int_{0}^T \int_{\partial\Omega} \varphi  w \left( |\partial_t\psi|^2 - |\nabla\psi|^2 \right) d\sigma dt
+\frac{s\lambda^2}{2} \int_{0}^T \int_{\partial\Omega} |\nabla w|^2 \partial_\nu\varphi\left( |\partial_t\psi|^2 - |\nabla\psi|^2 \right) d\sigma dt\\
&-\frac{s\lambda^2}{2}\int_{0}^T \int_{\partial\Omega} \varphi | w|^2 \partial_\nu\left( |\partial_t\psi|^2 - |\nabla\psi|^2 \right) d\sigma dt\\
&
+2s\lambda \int_{0}^T \int_{\partial\Omega} \partial_\nu w\varphi \partial_tw\partial_t\psi d\sigma dt\\
&-
 s\lambda \int_{0}^T \int_{\partial\Omega} \varphi |\partial_\nu w|^2 \nabla\psi \cdot \nu \, d\sigma dt\\
 &
+ s^3\lambda^3\int_0^T\int_{\partial\Omega} \varphi^3 (|\partial_t\psi|^2 - |\nabla\psi|^2)|w|^2\partial_\nu\psi d\sigma dt
\end{aligned}
\end{equation}
where $X_1$ gathers the non-dominating terms and satisfies
\[
|X_1| \leq M s \lambda^4 \int_{0}^{T} \int_{\Omega} \varphi |w|^2 \, dx dt.
\]
When \(t_0>2|x-x_0|\), by Cauchy-Schwarz, we notice that
\begin{equation}
\begin{split}
&s\lambda \int_\Omega (|\partial_tw|^2\varphi\partial_t\psi) (0)dx
+s\lambda \int_\Omega(|\nabla w|^2\varphi\partial_t\psi) (0)dx-2s\lambda \int_\Omega (\partial_tw \varphi\nabla w\cdot\nabla\psi) (0)dx\\
&\geq s\lambda \int_\Omega (|\partial_tw|^2\varphi\big(\partial_t\psi -|\nabla\psi|\big))(0)dx+s\lambda \int_\Omega (|\nabla w|^2\varphi\big(\partial_t\psi -|\nabla\psi|\big))(0)dx\geq 0.
\end{split}
\end{equation}
Since \(\alpha \in \left( \frac{2\beta}{\beta + n}, \frac{2}{\beta + n} \right)\). By explicit computations, we have
\[
2\partial_t^2 \psi - \alpha(\partial_t^2 \psi - \Delta \psi) > 0 \quad \text{and} \quad 4 + \alpha(\partial_t^2 \psi - \Delta \psi) > 0.
\]
As a direct consequence, we can write
\begin{align}
& 2s\lambda \int_{-T}^{T} \int_{\Omega} \varphi |\partial_t w|^2 \partial_t^2 \psi \,dxdt 
- \alpha s\lambda \int_{-T}^{T} \int_{\Omega} \varphi |\partial_t w|^2 (\partial_t^2 \psi - \Delta \psi) \,dxdt \notag \\
& \quad + 4s\lambda \int_{-T}^{T} \int_{\Omega} \varphi |\nabla w|^2 \,dxdt 
+ \alpha s\lambda \int_{-T}^{T} \int_{\Omega} \varphi |\nabla w|^2 (\partial_t^2 \psi - \Delta \psi) \,dxdt \notag \\
& \quad \ge M s\lambda \int_{-T}^{T} \int_{\Omega} \varphi |\partial_t w|^2 \,dxdt 
+ M s\lambda \int_{-T}^{T} \int_{\Omega} \varphi |\nabla w|^2 \,dxdt. 
\end{align}
On the other hand,  we can observe that
\begin{align*}
2s^3\lambda^4 \int_{0}^{T} \int_{\Omega} \varphi^3 |w|^2 \bigl(|\partial_t \psi|^2 - |\nabla \psi|^2\bigr)^2 dxdt
&+ \alpha s^3\lambda^3 \int_{0}^{T} \int_{\Omega} \varphi^3 |w|^2 \bigl(\partial_t^2 \psi - \Delta \psi\bigr)\bigl(|\partial_t \psi|^2 - |\nabla \psi|^2\bigr) dxdt \\
&+ 2s^3\lambda^3 \int_{0}^{T} \int_{\Omega} \varphi^3 |w|^2 \bigl(\partial_t^2 \psi |\partial_t \psi|^2 + 2|\nabla \psi|^2\bigr) dxdt \\
&\geq M s^3\lambda^3 \int_{0}^{T} \int_{\Omega} \varphi^3 |w|^2  dxdt.
\end{align*}
Then combining the above inequality and (\ref{P}), we have
\begin{equation}\label{1p}
\begin{aligned}
&\int_{0}^T \int_\Omega P_1 w P_2 w \, dxdt+2 s\lambda \int_{0}^T \int_{\partial\Omega} \varphi |\partial_\nu w|^2 (x-x_0) \cdot \nu(x) \, d\sigma dt \\
&\geq  M s\lambda \int_{0}^{T} \int_{\Omega} \varphi |\partial_t w|^2 \,dxdt 
+ M s\lambda \int_{0}^{T} \int_{\Omega} \varphi |\nabla w|^2 \,dxdt +M s^3\lambda^3 \int_{0}^{T} \int_{\Omega} \varphi^3 |w|^2  dxdt \\
&+(\alpha - 1)s\lambda\int_{\Omega}\partial_t w w\varphi (\partial_t^2\psi - \Delta\psi)|_0^Tdx
-\frac{(\alpha - 1)s\lambda}{2}\int_{\Omega} |w|^2\partial_t\varphi (\partial_t^2\psi - \Delta\psi)|_0^Tdx\\
&-s\lambda^2\int_\Omega \partial_t w  w\varphi(|\partial_t\psi|^2 - |\nabla\psi|^2)|_0^Tdx +\frac{s\lambda^2}{2}\int_\Omega |w|^2 \partial_t\left(\varphi(|\partial_t\psi|^2 - |\nabla\psi|^2)\right)|_0^Tdx\\
&-s\lambda \int_\Omega |\partial_tw|^2\varphi\partial_t\psi|_0^Tdx+2s\lambda \int_\Omega \partial_tw \varphi\nabla w\cdot\nabla\psi|_0^Tdx\\
&-s\lambda \int_\Omega|\nabla w|^2\varphi\partial_t\psi|_0^Tdx.
\end{aligned}
\end{equation}
Since 
\begin{equation}\label{pr}
\begin{aligned}
\int_{0}^T \int_\Omega |Pw - Rw|^2 dxdt
&\leq 2 \int_{0}^T \int_\Omega |Pw|^2 dxdt + 2 \int_{0}^T \int_\Omega |Rw|^2 dxdt\\
&\leq M \int_{0}^T \int_\Omega |Pw|^2 dxdt + M s^2\lambda^2 \int_{0}^T \int_\Omega \varphi^2 |w|^2 dxdt,
\end{aligned}
\end{equation}
using (\ref{pr}) and (\ref{1p}), we obtain
\begin{equation}\label{Pz}
\begin{aligned}
& M s\lambda \int_{0}^{T} \int_{\Omega} \varphi |\partial_t w|^2 \,dxdt 
+ M s\lambda \int_{0}^{T} \int_{\Omega} \varphi |\nabla w|^2 \,dxdt\\
&\quad +M s^3\lambda^3 \int_{0}^{T} \int_{\Omega} \varphi^3 |w|^2  dxdt + \int_{0}^T \int_\Omega |P_1 w|^2 \, dxdt+\int_{0}^T \int_\Omega |P_2 w|^2 \, dxdt\\
&+(\alpha - 1)s\lambda\int_{\Omega}\partial_t w w\varphi (\partial_t^2\psi - \Delta\psi)|_0^Tdx
-\frac{(\alpha - 1)s\lambda}{2}\int_{\Omega} |w|^2\partial_t\varphi (\partial_t^2\psi - \Delta\psi)|_0^Tdx\\
&-s\lambda^2\int_\Omega \partial_t w  w\varphi(|\partial_t\psi|^2 - |\nabla\psi|^2)|_0^Tdx +\frac{s\lambda^2}{2}\int_\Omega |w|^2 \partial_t\left(\varphi(|\partial_t\psi|^2 - |\nabla\psi|^2)\right)|_0^Tdx\\
&-s\lambda \int_\Omega |\partial_tw|^2\varphi\partial_t\psi|_0^Tdx+2s\lambda \int_\Omega \partial_tw \varphi\nabla w\cdot\nabla\psi|_0^Tdx\\
&-s\lambda \int_\Omega|\nabla w|^2\varphi\partial_t\psi|_0^Tdx\\
&\leq M \int_{0}^T \int_\Omega |P w|^2 dxdt + M s\lambda \int_{-T}^T \int_{\Gamma_0} \varphi |\partial_\nu w|^2 (x - x_0) \cdot \nu(x) \, d\sigma dt.
\end{aligned}
\end{equation}
Next, multiplying \(P_1w\) by \(\partial_tw\) and integrating by parts, we have
\begin{equation}\label{P1}
\begin{split}
&\int_0^T\int_{\Omega}P_1w \partial_twdxdt=\int_0^T\int_{\Omega}\big(\partial_t^2 w- \Delta w+ s^2\lambda^2\varphi^2 w\left( |\partial_t\psi|^2 - |\nabla\psi|^2 \right) \big)\partial_twdxdt\\
&=\frac{1}{2}\int_{\Omega}|\partial_tw|^2(T)dx-\frac{1}{2}\int_{\Omega}|\partial_tw|^2(0)dx+\frac{1}{2}\int_{\Omega}|\nabla w|^2(T)dx-\frac{1}{2}\int_{\Omega}|\nabla w|^2(0)dx \\
&+\frac{1}{2}\int_{\Omega}s^2\lambda^2|w|^2\varphi^2\left( |\partial_t\psi|^2 - |\nabla\psi|^2\right)(T)dx-\frac{1}{2}\int_{\Omega}s^2\lambda^2|w|^2\varphi^2\left( |\partial_t\psi|^2 - |\nabla\psi|^2\right)(0)dx\\
&-\frac{1}{2}\int_0^T\int_{\Omega}s^2\lambda^2|w|^2\partial_t\big(\varphi^2\left( |\partial_t\psi|^2 - |\nabla\psi|^2\right)\big)dxdt.
\end{split}
\end{equation}
Using Cauchy-Schwarz and \(t_0>2|x-x_0|\), we have
\begin{equation}\label{0P1}
\begin{split}
s^{1/2}\int_{\Omega}|\partial_tw|^2(0)dx+s^{1/2}\int_{\Omega}|\nabla w|^2(0)dx+s^{5/2}\int_{\Omega}\lambda^2|w|^2\varphi^2\left( |\partial_t\psi|^2 - |\nabla\psi|^2\right)(0)dx \\
\leq  \int_0^T\int_{\Omega}|P_1w|^2dxdt+s\int_0^T\int_{\Omega}|\partial_tw|^2dxdt+s^{5/2}\int_0^T\int_{\Omega}\lambda^2|w|^2\partial_t\big(\varphi^2\left( |\partial_t\psi|^2 - |\nabla\psi|^2\right)\big)dxdt\\
+2s^{1/2}\int_{\Omega}|\partial_tw|^2(T)dx+2s^{1/2}\int_{\Omega}|\nabla w|^2(T)dx
+2s^{5/2}\int_{\Omega}|w|^2(T)dx.
\end{split}
\end{equation}
Choosing \(s>0\) sufﬁciently large, we can absorb all the terms
\begin{equation}
\begin{aligned}
&(\alpha - 1)s\lambda\int_{\Omega}\partial_t w w\varphi (\partial_t^2\psi - \Delta\psi)(0)dx
-\frac{(\alpha - 1)s\lambda}{2}\int_{\Omega} |w|^2\partial_t\varphi (\partial_t^2\psi - \Delta\psi)(0)dx\\
&-s\lambda^2\int_\Omega \partial_t w  w\varphi(|\partial_t\psi|^2 - |\nabla\psi|^2)(0)dx +\frac{s\lambda^2}{2}\int_\Omega |w|^2 \partial_t\left(\varphi(|\partial_t\psi|^2 - |\nabla\psi|^2)\right)(0)dx\\
&-s\lambda \int_\Omega |\partial_tw|^2\varphi\partial_t\psi|_0^Tdx+2s\lambda \int_\Omega \partial_tw \varphi\nabla w\cdot\nabla\psi (0)dx\\
&-s\lambda \int_\Omega|\nabla w|^2\varphi\partial_t\psi (0)dx.
\end{aligned}
\end{equation}
of (\ref{Pz}) into
\begin{equation}
s^{1/2}\int_{\Omega}|\partial_tw|^2(0)dx+s^{1/2}\int_{\Omega}|\nabla w|^2(0)dx+s^{5/2}\int_{\Omega}\lambda^2|w|^2\varphi^2\left( |\partial_t\psi|^2 - |\nabla\psi|^2\right)(0)dx.
\end{equation}
Then taking \(s\) large, (\ref{Pz}) becomes
\begin{equation}\label{zP}
\begin{aligned}
&s^{1/2}\int_{\Omega}|\partial_tw|^2(0)dx+s^{1/2}\int_{\Omega}|\nabla w|^2(0)dx+s^{5/2}\int_{\Omega}\lambda^2|w|^2\varphi^2\left( |\partial_t\psi|^2 - |\nabla\psi|^2\right)(0)dx\\
&+ M s\lambda \int_{0}^{T} \int_{\Omega} \varphi |\partial_t w|^2 \,dxdt 
+ M s\lambda \int_{0}^{T} \int_{\Omega} \varphi |\nabla w|^2 \,dxdt\\
&\quad +M s^3\lambda^3 \int_{0}^{T} \int_{\Omega} \varphi^3 |w|^2  dxdt + \int_{0}^T \int_\Omega |P_1 w|^2 \, dxdt+\int_{0}^T \int_\Omega |P_2 w|^2 \, dxdt\\
&\leq M \int_{0}^T \int_\Omega |P w|^2 dxdt + M s\lambda \int_{-T}^T \int_{\Gamma_0} \varphi |\partial_\nu w|^2 (x - x_0) \cdot \nu(x) \, d\sigma dt\\
&+Ms\int_{\Omega}|\partial_tw|^2(T)dx+s\int_{\Omega}|\nabla w|^2(T)dx
+s^3\int_{\Omega}|w|^2(T)dx.
\end{aligned}
\end{equation}
Using \(w=e^{s\varphi}v \)
and
\(Pw = e^{s\varphi} (\partial_t^2-\Delta) \left( e^{-s\varphi} w \right)\), we go back to the variable \(v\) in (\ref{zP})
and obtain that there exists some positive constant M such that for all \(s\) and \(\lambda\) large,
\begin{equation}\label{PT}
\begin{aligned}
&s^{1/2} \int_\Omega e^{2s\varphi(0)} |\partial_t v(0)|^2 dx+s^{1/2} \int_\Omega e^{2s\varphi(0)} |\nabla v(0)|^2 dx+s^{5/2} \int_\Omega e^{2s\varphi(0)} | v(0)|^2 dx \\
&+ s \int_{0}^T \int_\Omega e^{2s\varphi} \left( |\partial_t v|^2 + |\nabla v|^2 \right) dxdt + s^3 \int_{0}^T \int_\Omega e^{2s\varphi} |v|^2 dxdt \\
&\leq M \int_{0}^T \int_\Omega e^{2s\varphi} |\partial_t^2v-\Delta v+qv|^2 dxdt + M s \int_{0}^T \int_{\Gamma_0} e^{2s\varphi} |\partial_\nu v|^2 d\sigma dt\\
&+Ms  \int_\Omega e^{2s\varphi(T)} \left( |\partial_t v(T)|^2 + |\nabla v(T)|^2 \right) dx + s^3 \int_\Omega e^{2s\varphi(T)} |v(T)|^2 dx.
\end{aligned}
\end{equation}

When the time \(T\) is large enough in the sense of, we claim that the conditions at times \(T\) can be removed of (\ref{PT}).
More details can be found in \cite{BBE13} and are omitted here. Consequently we have
\begin{equation}\label{PT}
\begin{aligned}
&s^{1/2} \int_\Omega e^{2s\varphi(0)} |\partial_t v(0)|^2 dx+s^{1/2} \int_\Omega e^{2s\varphi(0)} |\nabla v(0)|^2 dx+s^{5/2} \int_\Omega e^{2s\varphi(0)} | v(0)|^2 dx \\
&+ s \int_{0}^T \int_\Omega e^{2s\varphi} \left( |\partial_t v|^2 + |\nabla v|^2 \right) dxdt + s^3 \int_{0}^T \int_\Omega e^{2s\varphi} |v|^2 dxdt \\
&\leq M \int_{0}^T \int_\Omega e^{2s\varphi} |\partial_t^2v-\Delta v+qv|^2 dxdt + M s \int_{0}^T \int_{\Gamma_0} e^{2s\varphi} |\partial_\nu v|^2 d\sigma dt.
\end{aligned}
\end{equation}

Thus the proof of Lemma \ref{Car} is complete.

\end{proof}

\section*{Acknowledgment}
The work of Suliang Si is supported by  the Shandong Provincial Natural Science Foundation (No. ZR2022QA111).


\begin{thebibliography}{100}

\bibitem{BK1981}A. Bukhgeim and M. Klibanov, Global uniqueness of a class of multidimensional inverse
 problems, Sov. Math. Dokl., 24(1981), 244–247.



\bibitem{Beilina2012}
L. Beilina L and M. Klibanov, 
Approximate Global Convergence and Adaptivity for Coefficient Inverse Problems,
Berlin: Springer; 2012.

\bibitem{Bellassoued2017}
M. Bellassoued M and M. Yamamoto M ,
Carleman Estimates and Applications to Inverse Problems for Hyperbolic Systems,
Tokyo: Springer-Japan; 2017.


\bibitem{BBE13} L. Baudouin, M. de Buhan and S. Ervedoza, Global Carleman estimates for waves
and applications, Comm. Partial Diﬀerential Equatrions, 38 (2013), 823–859.


\bibitem{BBE17} L. Baudouin, M. de Buhan and S. Ervedoza, Convergent Algorithm Based on Carleman Estimates for the Recovery of a Potential in the Wave Equation, SIAM J. Number .Anal., 55 (2017), 1578-1613.




\bibitem{Fu2019}
X. Fu, Q. Lü and X. Zhang, Carleman Estimates for Second Order Partial Differential Operators and Applications,
Berlin: Springer; 2019.





\bibitem{HIY} X. Huang,, O. Imanuvilov and M. Yamamoto,  Stability for inverse source problems by Carleman estimates. Inverse Problems, 36(2020), 125006.



\bibitem{Imanuvilov1998}
O. Imanuvilov O and M. Yamamoto, 
Lipschitz stability in inverse parabolic problems by the Carleman estimate,
Inverse Problems, 14(1998), 1229-1245.

\bibitem{Imanuvilov2001}
O. Imanuvilov O and M. Yamamoto,
Global Lipschitz stability in an inverse hyperbolic problem by interior observations, Inverse Problems, 17(2001), 717-728.

\bibitem{Imanuvilov2001a}
O. Imanuvilov O and M. Yamamoto, 
Global uniqueness and stability in determining coefficients of wave equations, Commun. Part. Differ. Equ., 26(2001), 1409-1425.

\bibitem{Imanuvilov2003}
O. Imanuvilov O and M. Yamamoto, 2003
Determination of a coefficient in an acoustic equation with a single measurement, Inverse Problems, 19(2003), 157-171.

\bibitem{Isakov1990}
V. Isakov,
Inverse Source Problems,
 Providence (RI): American Mathematical Society; 1990.

\bibitem{JLY2017} D. Jiang, Y. Liu and M. Yamamoto, Inverse source problem for the hyperbolic equation
with a time-dependent principal part, J. Differ. Equ, 262 (2017), 653–681.

\bibitem{K2002} M. V. Klibanov, Carleman estimates and inverse problems: uniqueness and convexification
of multiextremal objective functions, (2002), 219–252.




\bibitem{Y1995} M. Yamamoto, Stability, reconstruction formula and regularization for an inverse source
hyperbolic problem by a control method, Inverse Probl, 11 (1995), 481–496.

\bibitem{Y1999} M. Yamamoto, Uniqueness and stability in multidimensional hyperbolic inverse problems,
J. Math. Pures Appl., 78 (1999), 65–98.




\bibitem{PY96} J.-P. Puel and M. Yamamoto, On a global estimate in a linear inverse hyperbolic
problem, Inverse Problems, 12 (1996),  995–1002.

\bibitem{PY97} J.-P. Puel and M. Yamamoto, Generic well-posedness in a multidimensional hyperbolic inverse problem, J. Inverse Ill-Posed Probl., 5 (1997),  55–83.


\bibitem{IY01a} O. Y. Imanuvilov and M. Yamamoto, Global Lipschitz stability in an inverse hyper-
bolic problem by interior observations, Inverse Problems, 17 (2001), 717–728.

\bibitem{IY01b} O. Y. Imanuvilov and M. Yamamoto, Global uniqueness and stability in determining
coeﬃcients of wave equations,Comm. Partial Diﬀerential Equations, 26 (2001),
1409–1425.

\bibitem{IY03} O. Y. Imanuvilov and M. Yamamoto, Determination of a coefficient in an acoustic
equation with a single measurement,Inverse Problems, 19 (2003),  157–171.


\bibitem{KY06} M. V. Klibanov and M. Yamamoto, Lipschitz stability of an inverse problem for an
acoustic equation, Appl. Anal., 85 (2006),  515–538.




\bibitem{K60} R. E. Kalman, A new approach to linear filtering and prediction problems, \textit{J. Basic Eng.}, 82 (1960), no. 1, 35–45.

\bibitem{Russell78} D. L. Russell, Controllability and stabilizability theory for linear partial differential equations: recent progress and open questions, \textit{SIAM Rev.}, 20 (1978), no. 4, 639–739.

\bibitem{Ho86} L. F. Ho, Observabilité frontière de l’équation des ondes, \textit{C. R. Acad. Sci. Paris Sér. I Math.}, 302 (1986), no. 10, 443–446.

\bibitem{Lions88} J.-L. Lions, \textit{Contrôlabilité exacte, perturbations et stabilisation des systèmes distribués}, Masson, Paris, 1988.

\bibitem{Komornik94} V. Komornik, \textit{Exact Controllability and Stabilization: The Multiplier Method}, John Wiley \& Sons, Chichester, 1994.

\bibitem{BLR92} C. Bardos, G. Lebeau and J. Rauch, Sharp sufficient conditions for the observation, control, and stabilization of waves from the boundary, \textit{SIAM J. Control Optim.}, 30 (1992), no. 5, 1024–1065.

\bibitem{Yao99} P. F. Yao, On the observability inequalities for wave equations with variable coefficients, \textit{SIAM J. Control Optim.}, 37 (1999), no. 5, 1568–1599.

\bibitem{Carleman39} T. Carleman, Sur un problème d’unicité pour les systèmes d’équations aux dérivées partielles à deux variables indépendantes, \textit{Acta Math.}, 69 (1939), 163–224.

\bibitem{FI94} A. V. Fursikov and O. Y. Imanuvilov, \textit{Controllability of Evolution Equations}, Lecture Notes Ser. 34, Seoul National Univ., Seoul, 1994.

\bibitem{Tataru95} D. Tataru, Boundary observability for conservative PDEs, \textit{Appl. Math. Optim.}, 31 (1995), no. 3, 257–278.

\bibitem{Zuazua93} E. Zuazua, Exact controllability for semilinear wave equations in one space dimension, \textit{Ann. Inst. H. Poincaré Anal. Non Linéaire}, 10 (1993), no. 1, 109–129.

\bibitem{Zhang00a} X. Zhang, Explicit observability inequalities for the wave equation with lower order terms by means of Carleman inequalities, \textit{SIAM J. Control Optim.}, 39 (2000), no. 3, 812–834.

\bibitem{Zhang00b} X. Zhang, Explicit observability estimate for the wave equation with potential and its application, \textit{Proc. R. Soc. Lond. A}, 456 (2000), no. 1997, 1101–1115.

\bibitem{IY01} O. Y. Imanuvilov and M. Yamamoto, Global uniqueness and stability in determining coefficients of wave equations, \textit{Comm. Partial Differ. Equ.}, 26 (2001), no. 7–8, 1409–1425.

\bibitem{Zhang08} X. Zhang, Carleman and observability estimates for stochastic wave equations, \textit{SIAM J. Control Optim.}, 47 (2008), no. 3, 1408–1430.

\bibitem{DZZ08} T. Duyckaerts, X. Zhang and E. Zuazua, On the optimality of observability inequalities for parabolic and hyperbolic systems with potentials, \textit{Ann. Inst. H. Poincaré Anal. Non Linéaire}, 25 (2008), no. 1, 1–41.

\bibitem{Isakov06} V. Isakov, \textit{Inverse Problems for Partial Differential Equations}, 2nd ed., Springer, New York, 2006.

\bibitem{AEWZ14} J. Apraiz, L. Escauriaza, G. Wang and C. Zhang, Observability inequalities and measurable sets, \textit{J. Eur. Math. Soc.}, 16 (2014), no. 11, 2433–2475.




\end{thebibliography}
	\end{document}